\newtheorem{thm}{Theorem}[section]
\newtheorem{lem}[thm]{Lemma}
\newtheorem*{lem*}{Lemma}
\newtheorem{prop}[thm]{Proposition}
\theoremstyle{remark}
\theoremstyle{definition}
\newcounter{my_enumerate_counter}
\newcommand\comment[1]{}
\renewcommand{\epsilon}{\varepsilon}
\title[von Neumann-Day problem]
{A geometric solution to the von {N}eumann-Day 
problem for finitely presented groups}
\keywords{amenable, Tarski number, finitely presented, free group, piecewise, projective, torsion free}
\subjclass[2010]{Primary: 43A07; Secondary: 20F05}
\title[Tarski numbers and piecewise projective groups]
{An upper bound for the Tarski numbers of non amenable groups of piecewise projective homeomorphisms}
\subjclass[2010]{Primary: 43A07; Secondary: 20F05}
\thanks{
The author thanks Nicolas Monod and Justin Moore for helpful discussions and comments.}
\author{Yash Lodha}
\address{EPFL
\\ Lausanne\\ Switzerland}
\email{{\tt yash.lodha@epfl.ch}}
\begin{document}

\begin{abstract}
The Tarski number of a non amenable group is the smallest number of pieces needed for a paradoxical decomposition of the group.
Non amenable groups of piecewise projective homeomorphisms were introduced in \cite{Monod}, and non amenable finitely presented groups of piecewise projective homeomorphisms were introduced
in \cite{LodhaMoore}.
These groups do not contain non abelian free subgroups.
In this article we prove that the Tarski number of all groups in both families is at most $25$.
In particular we demonstrate the existence of a paradoxical decomposition with $25$ pieces.

Our argument also applies to any group of piecewise projective homeomorphisms 
that contains as a subgroup the group of piecewise $PSL_2(\mathbb{Z})$ homeomorphisms of $\mathbb{R}$
with rational breakpoints, and an affine map that is a not an integer translation.
\end{abstract}

\maketitle

The von Neumann-Day problem is a striking issue that lurks around the boundary between amenability and non amenability for discrete groups.
The question asks whether there exist non amenable groups that do not contain non abelian free subgroups.
It was solved by Olshanskii around $1980$ (see \cite{Olsh}).
In $2012$ Monod observed in \cite{Monod} that the group of piecewise projective homeomorphisms of the real line is a counterexample.
In his article, Monod produces an infinite family of examples, some of which are countable, but none are finitely presentable.
In \cite{LodhaMoore} we constructed finitely presented non amenable subgroups of Monod's group $H(\mathbb{Z}[\frac{1}{\sqrt{2}}])$.
A striking feature of these examples is that they admit short finite presentations.

Every non amenable group $G$ admits a paradoxical decomposition.
In particular, there exist the following:
\begin{enumerate}
\item A partition of $G$ into pairwise disjoint subsets $S_1,...,S_n,T_1,...,T_m$.
\item Elements $s_1,...,s_n,t_1,...,t_m\in G$.
\end{enumerate}
which satisfy that $G=\bigcup_{1\leq i\leq n}s_iS_i=\bigcup_{1\leq i\leq m}t_iT_i$.
The number of pieces of this paradoxical decompsition is defined as the number $n+m$. 
The Tarski number of a non amenable group is defined as the smallest number of pieces needed for a paradoxical decomposition of the group.

It was observed by von Neumann that the free group of rank $2$ admits a paradoxical decomposition with four pieces.
It follows that any group that contains a non abelian free subgroup admits a paradoxical decomposition with four pieces.
By a theorem of Dekker and Jonsson (see Proposition $20$ in \cite{CeccGrigHarpe}), the converse holds as well.
Therefore, a group has Tarski number $4$ if and only if it contains non abelian free subgroups. 

The systematic study of Tarski numbers of groups was initiated in \cite{CeccGrigHarpe}.
It was proved that the Tarski number of any torsion group is at least six,
and that the Tarski number of any non-cyclic free Burnside group of odd exponent at least $665$
is between $6$ and $14$.

More recently, it was shown in \cite{ErshovGolanSapir} that finitely generated non amenable groups can have arbitrarily large Tarski numbers.
The authors also construct explicit examples of groups with Tarski numbers $5$ and $6$.
These examples are respectively torsion-by-abelian and torsion groups.

In this article we establish an upper bound for Tarski numbers
in the torsion free landscape of groups of piecewise projective homeomorphisms of the real line. 

\begin{thm}\label{main1}
All groups in Monod's family $\{H(A)\mid A\text{ is a dense subring of }\mathbb{R}\}$ defined in \cite{Monod} have Tarski number at most $25$.
Both finitely presented nonamenable groups defined in \cite{LodhaMoore} have Tarski number at most $25$.
\end{thm}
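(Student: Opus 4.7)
The plan is to exploit the final sentence of the abstract: it suffices to bound the Tarski number of the group $G_{0}$ generated by the piecewise $PSL_{2}(\mathbb{Z})$ homeomorphisms of $\mathbb{R}$ with rational breakpoints together with a single affine map that is not an integer translation. Tarski numbers are monotone under inclusion of non-amenable subgroups: given a paradoxical decomposition of $H \leq G$ with $k$ pieces and partition $\{S_{i}, T_{j}\}$ of $H$, one obtains a paradoxical decomposition of $G$ with $k$ pieces by replacing each piece $P$ with $PR$, where $R$ is a system of right coset representatives for $H$ in $G$; the same group elements still work. Each group mentioned in the theorem statement contains such a $G_{0}$: for Monod's $H(A)$, $PSL_{2}(\mathbb{Z}) \leq PSL_{2}(A)$ holds for any dense unital subring $A \leq \mathbb{R}$, and density of $A$ supplies an affine map in $H(A)$ that is not an integer translation; for the two Lodha--Moore finitely presented groups, the required subgroup can be read off the explicit presentations. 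This reduces the theorem to showing that the Tarski number of $G_{0}$ is at most $25$.

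The next step is to transport the problem to a single free orbit. I would choose a basepoint $p \in \mathbb{R}$ whose $G_{0}$-stabilizer is trivial; such $p$ exists because $G_{0}$ is countable and each non-identity element of $G_{0}$ fixes only countably many points of $\mathbb{R}$. The orbit map $g \mapsto g(p)$ then gives an equivariant bijection between $G_{0}$ and its orbit $X := G_{0} \cdot p \subseteq \mathbb{R}$, so any paradoxical decomposition of $X$ under the restricted action with $k$ pieces transports directly to a paradoxical decomposition of $G_{0}$ with $k$ pieces. The problem is thereby reduced to exhibiting explicit elements of $G_{0}$ together with a partition of $X$ into at most $25$ pieces realizing a paradoxical decomposition of $X$.

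The hard part will be producing this explicit decomposition. Since neither $G_{0}$ nor the ambient supergroup contains a non-abelian free subgroup, a direct ping-pong argument at the level of the group is unavailable. The strategy instead must be local: exploit the free product decomposition $PSL_{2}(\mathbb{Z}) \cong \mathbb{Z}/2 \ast \mathbb{Z}/3$ to stage a ping-pong-style paradoxical action on a carefully chosen subinterval $I \subset \mathbb{R}$, patch the resulting local transformations to the identity outside $I$ via piecewise $PSL_{2}(\mathbb{Z})$ surgery across rational breakpoints so that they become elements of $G_{0}$, and then use iterates of the non-integer affine map to shift these local paradoxical data over the orbit $X$ so that two disjoint unions of translated pieces each cover $X$. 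Producing explicit elements of $G_{0}$ and an explicit partition of $X$ that realize such a decomposition with only $25$ pieces---and verifying that the partition actually behaves as required---is the technical heart of the argument.
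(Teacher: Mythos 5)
There is a genuine gap, in two places. First, your reduction to the group $\langle F,f\rangle$ of Theorem \ref{main2} is not available. The Lodha--Moore groups contain no affine map other than integer translations: each generator (hence each element) agrees with an integer translation in a neighborhood of $\pm\infty$, so no element is globally of the form $t\mapsto at+r$ with $a\neq 1$ or $r\notin\mathbb{Z}$, and the ``extra'' generator is a nowhere-affine piecewise map. Likewise, for a dense subring such as $A=\mathbb{Z}[\pi]$, which has no units besides $\pm 1$, the rationals are not hyperbolic fixed points of $PSL_2(A)$ and hence not admissible breakpoints, so $H(A)$ need not contain the piecewise $PSL_2(\mathbb{Z})$ group $F$ with rational breakpoints. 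The paper's reduction is of a different nature: it passes not to a common subgroup but to a common \emph{orbit equivalence relation}, showing $E_H^{\mathbb{R}}=E_{\mathbf{M}}^{\mathbb{R}}$ for a proper, hence non-discrete, overgroup $\mathbf{M}$ of $PSL_2(\mathbb{Z})$ in $PSL_2(\mathbb{R})$. Your single-free-orbit step is also at odds with the actual argument, which works with Lebesgue measure on a compact interval rather than with one countable orbit.

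Second, and more seriously, the entire content of the theorem is deferred to ``the technical heart,'' and the strategy you sketch for it points in the wrong direction. The paper never produces an explicit partition into $25$ pieces. Instead it exhibits a $13$-element set $\tilde{T}\cup\{1_H\}$ of translating elements and verifies Hall's $2$-marriage condition $|S_1\cdot u_1\cup S_2\cdot u_2|\geq |u_1|+|u_2|$ for all finite $u_1,u_2\subseteq H$; the Ershov--Golan--Sapir marriage theorem then yields a paradoxical decomposition with $13+12=25$ pieces. The verification is measure-theoretic: by Carri\`ere--Ghys, $\mathbf{M}$ contains a \emph{non-discrete} free subgroup $\mathbf{F}$, so one can choose twelve conjugates $a^iwa^{-i}$, $b^iw'b^{-i}$ in an arbitrarily small ball $B_\delta(1_{\mathbf{F}})$, distorting Lebesgue measure on $I$ by less than $1/48$; an integral pigeonhole argument on the functions $\phi_{u,a},\phi_{u,b}$ then finds, for every finite $u$, four of the thirteen elements that jointly double $u$. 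Your plan to run ping-pong off the splitting $PSL_2(\mathbb{Z})\cong \mathbb{Z}/2\ast\mathbb{Z}/3$ cannot substitute for this: $PSL_2(\mathbb{Z})$ is discrete, so its free subgroups cannot supply the near-identity elements on which all the measure estimates rest, and nothing in the patching-and-shifting picture explains where a bound of $25$ (or any explicit bound) would come from. Without the marriage-condition criterion and the non-discreteness input, the proposal does not reach the statement.
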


In fact our argument applies to other groups of piecewise projective homeomorphisms of the real line,
which do not contain free subgroups.

\begin{thm}\label{main2}
Let $F$ be the group of piecewise $PSL_2(\mathbb{Z})$ homeomorphisms of $\mathbb{R}$ with breakpoints in the set of rational numbers.
Let $f(t)=at+r$ such that $a,r\in \mathbb{R}, a>0$ and if $a=1$, then $r\in \mathbb{R}\setminus \mathbb{Z}$. 
Then the group $\langle F,f\rangle$ admits a paradoxical decomposition with $25$ pieces.
\end{thm}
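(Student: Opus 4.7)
The plan is to follow the standard strategy for bounding Tarski numbers via actions: produce an action of $G := \langle F, f \rangle$ on a set $X$ with a free orbit, exhibit an explicit $G$-paradoxical decomposition of $X$ using $25$ pieces, and transfer this back to a $25$-piece paradoxical decomposition of $G$ via a transversal argument. Concretely, given disjoint subsets $A_1, \dots, A_n, B_1, \dots, B_m \subseteq X$ with $n + m = 25$, elements $s_i, t_j \in G$ satisfying $X = \bigcup_i s_i A_i = \bigcup_j t_j B_j$, and a transversal $T$ meeting each orbit of the (free) $G$-action on $X$ exactly once, the sets $\{ h \in G : hT \cap A_i \neq \emptyset \}$ and $\{ h \in G : hT \cap B_j \neq \emptyset \}$ form the desired decomposition of $G$ with the same translators.

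For the free orbit the hypothesis on $f$ is essential. If $a \neq 1$, then $f$ has a unique real fixed point; combining $f$ with the $\PSL_2(\Zbb)$-piecewise dynamics of $F$ on $\Qbb \cup \{\infty\}$ breaks stabilizers on a dense orbit. If $a = 1$ with $r \notin \Zbb$, then iterating $f$ together with elements of $F$ produces orbits escaping the rigid rational structure preserved by $F$ alone, and a genericity argument yields an irrational point of trivial $G$-stabilizer. For the paradoxical decomposition of $X$ itself, the strategy is to combine two sources of paradoxicality: elements of $F$, whose piecewise $\PSL_2(\Zbb) \cong \Zbb/2 \ast \Zbb/3$ structure furnishes a Ping-Pong--type decomposition of the part of $X$ lying in the $F$-saturation of $\Qbb \cup \{\infty\}$ using translators drawn from the two free factors, and the affine map $f$, which supplies the remaining translators needed to reach $X$-points not covered by $F$-translates. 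The piecewise nature of $G$ lets one conjugate and patch both sources efficiently, cutting and reassembling intervals of $\Rbb$ along rational breakpoints so that the two kinds of translators align.

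The hard part will be the counting: arranging the $F$-side and the $f$-side of the decomposition so that the total is exactly $25$, with no wasted translator. Each additional piece corresponds to an extra translator needed to cover a residual subset of $X$, so optimising the count forces a simultaneous careful choice of partition boundaries in $\Rbb$ and of group elements in $G$. I expect the proof to pin down a specific split, for example $n + m = 12 + 13$, where the first batch of translators witnesses paradoxicality of the rational sub-orbit (reflecting the presentation of $\Zbb/2 \ast \Zbb/3$ together with the piecewise flexibility of $F$) and the second batch handles the complement using conjugates of $f$. Verifying that no shorter decomposition is being left on the table at either stage, while still producing a bona fide partition of $X$, is the technical heart of the argument.
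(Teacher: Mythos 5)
Your plan has a genuine gap at its core: you never identify a working source of paradoxicality. The ``Ping-Pong--type decomposition'' you propose to extract from the piecewise $\PSL_2(\Zbb)\cong \Zbb/2 * \Zbb/3$ structure of $F$ does not exist. Elements of $F$ are only \emph{piecewise} projective; the only elements of $\PSL_2(\Zbb)$ that lie globally in $F$ (as homeomorphisms of $\Rbb$ fixing $\infty$) are integer translations, and indeed $\langle F,f\rangle$ contains no non-abelian free subgroup at all --- that is the whole point of these groups. So no subset of $G$ can play ping-pong on $X$, and no ``F-side'' of the decomposition can be built this way. You have also misidentified the role of the hypothesis on $f$: it is not there to manufacture a free orbit (the action is a.e.\ free regardless), but to guarantee that the group $\mathbf{M}=\langle \PSL_2(\Zbb),\Gamma\rangle\leq \PSL_2(\Rbb)$ generated by $\PSL_2(\Zbb)$ together with the projective matrix $\Gamma$ corresponding to $f$ is \emph{non-discrete}. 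That non-discreteness is the engine of the proof: by a theorem of Carri\`ere--Ghys, $\mathbf{M}$ contains a non-discrete free subgroup $\mathbf{F}$, and the orbit equivalence relation of $\mathbf{M}$ on $\Rbb$ coincides with that of $\langle F,f\rangle$. Crucially, elements of $\mathbf{F}$ close to the identity can be matched by elements of $\langle F,f\rangle$ on a fixed compact interval $I$ --- but only locally, which is exactly why a clean four-piece ping-pong transfer is unavailable and the bound is $25$ rather than $4$.

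The paper's actual route also bypasses the step you treat as routine, namely constructing an explicit $25$-piece paradoxical decomposition of $X$ and transferring it. Instead it fixes $12$ elements of $\mathbf{F}$ near the identity (six conjugated into words starting with $a^{\pm1}$, six with $b^{\pm1}$), realizes them on $I$ by elements $\tilde{T}\subset H$, and verifies Hall's $2$-marriage condition $|(\tilde{T}\cup\{1_H\})\cdot u|\geq 2|u|$ for every finite $u\subset H$ by a measure-theoretic pigeonhole argument: one decomposes each orbit according to a left-coset partition of $\mathbf{M}$ driven by $\mathbf{F}$, and shows that among $7$ candidates at least $4$ push a positive-measure subset of $I$ into the ``heavy'' half. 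The theorem of Ershov--Golan--Sapir on evenly colored $2$-marriages then yields translating sets of sizes $13$ and $12$, whence $25$ pieces. Your guessed split $12+13$ is numerically right but for the wrong reason: both batches come from the same set $\tilde{T}$ (one augmented by the identity), not from a rational/irrational dichotomy, and the count $6+6+1$ is dictated by the pigeonhole estimate $7(\tfrac12-\epsilon)>3$, not by any optimality consideration --- the theorem claims only an upper bound, so ``verifying that no shorter decomposition is left on the table'' is not part of the task.
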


It follows that any group of homeomorphisms of the real line containing $F$ (as in Theorem \ref{main2}) and some affine map that is not equal to an integer translation admits a paradoxical decomposition
with $25$ pieces. 
The group $F$ above is isomorphic to Thompson's group $F$,
which is usually described as a group of piecewise linear homeomorphisms of the unit interval.
(The piecewise projective version of $F$ was introduced by Thurston in the $1970$s.
See \cite{LodhaMoore} for more details.)
 
 \section{Preliminaries}
The Tarski number of a group is bounded above by Tarski numbers of its subgroups,
so it suffices to prove the main theorem for any countable group in the family. 
The actions of the groups on $\mathbb{R}$ will be right actions, however the actions of the groups on themselves will be left actions.
We shall fix $\mu$ as the Lebesgue measure on the real line.
If a group $G$ acts on a set $X$, we denote by $E_G^X$ the associated orbit equivalence relation.
We shall use the following basic fact about the orbit equivalence relations of our groups.\\

\begin{lem} 
Let $H$ be any countable group in the family of groups considered in theorems \ref{main1} and \ref{main2}. 
Then there is a proper overgroup $\mathbf{M}$ of $PSL_2(\mathbb{Z})$ in $PSL_2(\mathbb{R})$ such that 
$E_{\mathbf{M}}^{\mathbb{R}}=E_H^{\mathbb{R}}$.
\end{lem}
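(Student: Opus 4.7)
The plan is to take $\mathbf{M}$ to be the subgroup of $PSL_2(\mathbb{R})$ generated by $PSL_2(\mathbb{Z})$ together with every M\"obius transformation that arises as a local piece of some element of $H$. Since $H$ is countable and each $h \in H$ has finitely many pieces, the generating set is countable, so $\mathbf{M}$ is a countable, and hence proper, subgroup of the uncountable group $PSL_2(\mathbb{R})$. The containment $PSL_2(\mathbb{Z}) \subsetneq \mathbf{M}$ is strict in each of the cases covered by Theorems \ref{main1} and \ref{main2}: the affine generator $f$ in Theorem \ref{main2}, the non-integer elements of $PSL_2(A)$ appearing as pieces in Monod's $H(A)$, and the $PSL_2(\mathbb{Z}[1/\sqrt{2}])$ pieces of the Lodha--Moore generators all lie in $\mathbf{M} \setminus PSL_2(\mathbb{Z})$ by hypothesis on the family.

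The inclusion $E_H^{\mathbb{R}} \subseteq E_{\mathbf{M}}^{\mathbb{R}}$ is immediate: if $x \cdot h = y$ with $h \in H$, then some piece $\gamma \in \mathbf{M}$ of $h$ agrees with $h$ on a (possibly one-sided) neighborhood of $x$, whence $x \cdot \gamma = y$. For the reverse inclusion, writing an element of $\mathbf{M}$ as a word in generators and chasing $x$ through the partial products reduces the task to the following: for every generator $\gamma$ of $\mathbf{M}$ and every $x \in \mathbb{R}$, produce some $h \in H$ with $x \cdot h = x \cdot \gamma$. Each such generator either lies in $PSL_2(\mathbb{Z})$, in which case Thurston's piecewise $PSL_2(\mathbb{Z})$ version of Thompson's $F$ (which is contained in every $H$ in the family) already supplies an element realising $\gamma$ near $x$; or it realises a piece of some fixed $h_0 \in H$ near some fixed $x_0 \in \mathbb{R}$, and the task becomes one of transport.

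For the transport step I would exploit the transitivity of $F$ on pointed rational subintervals to produce $F$-elements $g_1, g_2$ such that $g_1 h_0 g_2 \in H$ agrees with $\gamma$ on a neighborhood of $x$. The main obstacle is the countable set of ``bad'' points -- singularities of $\gamma$, breakpoints of $h_0$, and rationals at which the one-sided behaviour matters -- which I would handle by first passing to a subinterval on which $\gamma$ is smooth and disjoint from these obstructions, and then invoking the transitivity of $F$ on such intervals to complete the transport.
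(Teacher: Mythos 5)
Your definition of $\mathbf{M}$ and the inclusion $E_H^{\mathbb{R}}\subseteq E_{\mathbf{M}}^{\mathbb{R}}$ are fine, but the reverse inclusion is where all the content lies, and your transport step breaks down there. Suppose the generator $\gamma$ is the piece of $h_0\in H$ valid on an interval $J_0$ and $x\notin J_0$. Any $g_1\in F$ acts near $x$ by some $\delta_1\in PSL_2(\mathbb{Z})$ with $\delta_1(x)\in J_0$, so $h_0(g_1(x))=\gamma(\delta_1(x))$, and the correcting map $g_2$ must carry $\gamma(\delta_1(x))$ to $\gamma(x)$. The exact correction is $\gamma\delta_1^{-1}\gamma^{-1}$, which in general does not lie in $PSL_2(\mathbb{Z})$; more to the point, $\gamma(\delta_1(x))$ and $\gamma(x)$ lie in the same $\gamma PSL_2(\mathbb{Z})\gamma^{-1}$-orbit but for all but countably many $x$ they are \emph{not} $PSL_2(\mathbb{Z})$-equivalent (try $\gamma(z)=z+\sqrt{2}$ with $J_0$ of length less than $1$, forcing $\delta_1$ to be non-integral-translation), so no $g_2\in F$ can perform the correction. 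Allowing $g_2$ to be a general element of $H$ that locally realizes $\gamma\delta_1^{-1}\gamma^{-1}$ is circular: that is precisely an instance of the statement being proved. Transitivity of $F$ on rational intervals does not help, since the points to be matched are typically irrational and the required correction is not integral. There is also a secondary error: Thurston's piecewise-$PSL_2(\mathbb{Z})$ copy of $F$ is \emph{not} contained in every $H$ in the family. Its breakpoints are rational, hence parabolic fixed points, whereas $H(A)$ only permits breakpoints at hyperbolic fixed points of $PSL_2(A)$; for instance when $A=\mathbb{Z}[\pi]$, whose only units are $\pm 1$, no hyperbolic element of $PSL_2(A)$ fixes a rational, so $H(A)$ meets $F$ only in the integer translations.

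The paper does not attempt any of this from scratch. It takes $\mathbf{M}=PSL_2(A)$ for Monod's groups and $\mathbf{M}=\langle PSL_2(\mathbb{Z}),\Gamma\rangle$ for the Lodha--Moore groups and for $\langle F,f\rangle$, and simply cites the already-established identifications of orbit relations: Proposition $9$ of \cite{Monod} for $E_{H(A)}^{\mathbb{R}}=E_{PSL_2(A)}^{\mathbb{R}}$, and the computations $E_{G_0}^{\mathbb{R}}=E_{\langle PSL_2(\mathbb{Z}),\Gamma\rangle}^{\mathbb{R}}$ and $E_{F}^{\mathbb{R}}=E_{PSL_2(\mathbb{Z})}^{\mathbb{R}}$ from \cite{LodhaMoore}, the last combined with the globally defined affine map to handle Theorem \ref{main2}. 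Those cited results are exactly the local-realization statement you are trying to obtain by transport; their proofs construct, for each $\gamma$ and each $x$, an element of $H$ agreeing with $\gamma$ near $x$ by gluing $\gamma$ to other projective maps across suitably chosen hyperbolic fixed points, not by conjugating one fixed piece around by $F$. To make your argument self-contained you would have to reproduce that gluing construction; as written, the proof has a genuine gap.
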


\begin{proof}
In Proposition $9$ of \cite{Monod} it has been shown that $E_{H(A)}^{\mathbb{R}}=E_{PSL_2(A)}^{\mathbb{R}}$.
Recall that in \cite{LodhaMoore} we construct two finitely presented non amenable groups $G_0$ and $G$.  
(In particular it was shown that $G_0$ admits a presentation with $3$ generators and $9$ relations, and is a subgroup of $G$.)
In \cite{LodhaMoore} it was also shown that $E_{G_0}^{\mathbb{R}}=E_{\langle PSL_2(\mathbb{Z}), \Gamma\rangle}^{\mathbb{R}}$
where $\Gamma=\left( \begin{array}{cc}
\sqrt{2} & 0  \\
0 & \frac{1}{\sqrt{2}}  \end{array} \right)$.
It is not too hard to check that in fact $E_G^{\mathbb{R}}=E_{G_0}^{\mathbb{R}}$.

For the groups of the form $\langle F,f\rangle$ described in Theorem \ref{main2}, we claim that 
$E_{\langle F,f\rangle}^{\mathbb{R}}=E_{\langle PSL_2(\mathbb{Z}),\Gamma \rangle}^{\mathbb{R}}$
where $\Gamma=\left( \begin{array}{cc}
a & b  \\
0 & a^{-1}  \end{array} \right)$
for some $a,b\in \mathbb{R}$.
This easily follows from the fact that $E_{F}^{\mathbb{R}}=E_{PSL_2(\mathbb{Z})}^{\mathbb{R}}$
(this is established in the last paragraph of page $4$ in \cite{LodhaMoore}), together with the existence of the globally defined affine map.
Since the affine map is not an integer translation, we conclude that $\langle PSL_2(\mathbb{Z}),\Gamma \rangle$
is a proper overgroup of $PSL_2(\mathbb{Z})$.
\end{proof}

For the rest of the article we fix $H$ as any countable group in our family and $\mathbf{M}$ as above.
From Th\'{e}or\`{e}me $3$ from \cite{GhysCarriere} it follows that $\mathbf{M}$ contains a non discrete free subgroup $\bf{F}$,
which we also fix throughout the article.
We endow $\mathbf{F}$ with the metric induced by $PSL_2(\mathbb{R})$ and denote by $B_{\delta}(1_{\bf{F}})$
 the ball of radius $\delta$ around the identity element.

Let $\mathbf{F}$ be freely generated by $a,b$.
Consider the set of right cosets of $\mathbf{F}$ in $\mathbf{M}$,
and fix $S$ as a set of coset representatives.
For $c\in \{a,b\}$ we define $\mathbf{F}_c$ as the set containing the empty word if $c=a$, together with all reduced words beginning with $c$ or $c^{-1}$.
Clearly $\mathbf{F}_a,\mathbf{F}_b$ provide a partition of $\mathbf{F}$ into disjoint sets.
Next, define $P_c=\{w s\mid w\in \mathbf{F}_c, s\in S\}$.
Clearly, $P_a, P_b$ provide a disjoint partition of $\mathbf{M}$.

Let $P_f(H)$ be the set of finite subsets of $H$.
For each $u\in P_f(H)$ we also denote by $u$ the probability measure on $H$ which assigns measure $\frac{1}{|u|}$
to each element of $u$.
For $x\in \mathbb{R}$, we denote by $[x]$ the orbit of $x$, and define $\lambda_{u,x}:2^{[x]}\to [0,1]$
as $\lambda_{u,x}(A)=u(\{g\in H\mid x\cdot g\in A\})$.
Clearly, this is a measurable assignment of measures on the orbits.
This means that for every measurable set $E\subset E_{\mathbf{M}}^{\mathbb{R}}$,
the function $x\mapsto \lambda_{u,x}(\{y\in \mathbb{R}\mid (x,y)\in E\})$
is measurable.
We now define a measurable partition of the equivalence relation $E_{\mathbf{M}}^{\mathbb{R}}$.

For each $x\in \mathbb{R}$, define $A_x=\{y\in [x]\mid \exists g\in P_a, x\cdot g=y\}=x\cdot P_a$
and $B_x=\{y\in [x]\mid \exists g\in P_b, x\cdot g=y\}=x\cdot P_b$.
Note that $A_x\cup B_x=[x]$ and $A_x\cap B_x=\emptyset$ almost everywhere.
The latter is a direct consequence of the fact that the action of $\mathbf{M}$ on $\mathbb{R}$ is a.e. free.

Further, define $\phi_{u,a}: \mathbb{R}\to [0,1]$ as $\phi_{u,a}(x)=\lambda_{u,x}(A_x)$
and $\phi_{u,b}: \mathbb{R}\to [0,1]$ as $\phi_{u,b}(x)=\lambda_{u,x}(B_x)$. 
We remark that $\phi_{u,a}(x)+\phi_{u,b}(x)=1$ for $x\in \mathbb{R}$ almost everywhere.
Clearly, these are measurable functions since the assignment of measures on the orbits is measurable.

We will require the following Lemma about the continuous action of $\bf{F}$ on $\mathbb{R}\cup \{\infty\}$
which follows from the fact that derivatives of projective transformations vary continuously with respect to the usual topology on $PSL_2(\mathbb{R})$.

\begin{lem}\label{prel1}
Let $I$ be a compact interval in $\mathbb{R}$.
For each $\epsilon>0$ there is a $\delta>0$ such that
for any measurable set $J\subset I$, the following holds. 
For each element $g\in B_{\delta}(1_{\bf{F}})$, $$(1-\epsilon)\mu(J)<\mu(J\cdot g)<(1+\epsilon)\mu(J)$$
\end{lem}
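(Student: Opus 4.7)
The plan is to exploit the fact that each element of $\mathbf{F} \subset PSL_2(\mathbb{R})$ acts on $\mathbb{R}$ by a Möbius transformation, whose derivative depends continuously (jointly) on the matrix entries and the base point, away from the unique pole. The identity element acts as the identity map with derivative identically $1$, so for matrices near $1_\mathbf{F}$ the derivative on any fixed compact set is uniformly close to $1$. Then a single application of the change-of-variables formula converts this pointwise derivative control into the measure estimate.

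Concretely, I would first fix a representative matrix $\begin{pmatrix} a & b \\ c & d \end{pmatrix}$ with $ad - bc = 1$ for an element $g \in PSL_2(\mathbb{R})$, so that its action on $\mathbb{R}$ is $x \mapsto (ax+b)/(cx+d)$ and its derivative at $x$ equals $1/(cx+d)^2$. I would observe that (i) the pole $-d/c$ of $g$ depends continuously on $(a,b,c,d)$ and tends to $\infty$ as $g \to 1$, and (ii) on any compact set avoiding the pole, the derivative $1/(cx+d)^2$ is a continuous function of $(g,x)$ tending to $1$ uniformly as $g \to 1_{\mathbf{F}}$. Hence, given the compact interval $I$ and $\epsilon > 0$, I can choose $\delta > 0$ small enough that for every $g \in B_\delta(1_\mathbf{F})$ the pole of $g$ lies outside a fixed neighborhood of $I$ and
\[
1 - \epsilon < g'(x) < 1 + \epsilon \quad \text{for all } x \in I.
\]

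With $\delta$ so chosen, the action of $g$ is a $C^1$ diffeomorphism on an open neighborhood of $I$. For any measurable $J \subset I$, the change-of-variables formula gives
\[
\mu(J \cdot g) = \int_{J} g'(x)\, d\mu(x),
\]
and the derivative bound above immediately yields $(1 - \epsilon)\mu(J) < \mu(J \cdot g) < (1 + \epsilon)\mu(J)$. The only point that requires any care is ensuring the pole stays away from $I$, which is automatic by shrinking $\delta$ since $1_\mathbf{F}$ has no pole in $\mathbb{R}$; no genuine obstacle arises, as this is essentially a uniform continuity statement for the action in the $C^1$ topology on a compact set.
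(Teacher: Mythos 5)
Your proof is correct and follows exactly the route the paper intends: the paper states this lemma without proof, justifying it only by the remark that derivatives of projective transformations vary continuously in the topology of $PSL_2(\mathbb{R})$, and your argument (explicit derivative $1/(cx+d)^2$, pole escaping to infinity as $g\to 1_{\mathbf{F}}$, uniform bound on $I$, change of variables) is precisely the fleshed-out version of that remark. No gaps.
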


\section{The paradox} 

Throughout this section, we fix $I$ to be a compact interval in $\mathbb{R}$.
We first prove a basic Lemma about elements of $P_f(H)$.

\begin{lem}\label{basiclemma}
There is a $\delta>0$ such that for each $u\in P_f(H)$ and $g_1,...,g_6\in B_{\delta}(1_{\mathbf{F}})$, the following conditions hold.
\begin{enumerate}

\item Assume that $\phi^{-1}_{u,a}[\frac{1}{2},1]\cap I$ has measure at least $\frac{1}{2}\mu(I)$.
Then there is a set of positive measure $L\subseteq I$ and distinct elements $h_1,...,h_4\in \{1_{\mathbf{F}},g_1,...,g_6\}$
such that $L\cdot h_i\subseteq \phi^{-1}_{u,a}[\frac{1}{2},1]$ for each $1\leq i\leq 4$.

\item Assume that $\phi^{-1}_{u,b}[\frac{1}{2},1]\cap I$ has measure at least $\frac{1}{2}\mu(I)$.
Then there is a set of positive measure $L\subseteq I$ and distinct elements $h_1,...,h_4\in \{1_{\mathbf{F}},g_1,...,g_6\}$
such that $L\cdot h_i\subseteq \phi^{-1}_{u,b}[\frac{1}{2},1]$ for each $1\leq i\leq 4$.

\end{enumerate}
\end{lem}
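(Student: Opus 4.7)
The plan is to prove (1); statement (2) is entirely symmetric with $b$ in place of $a$. Write $E := \phi_{u,a}^{-1}[\tfrac12, 1]$ and, for each of the seven elements $h \in \{1_{\mathbf{F}}, g_1, \ldots, g_6\}$, set $F_h := \{x \in I : x \cdot h \in E\}$. Finding the desired $L \subseteq I$ and distinct $h_1, \ldots, h_4$ amounts to exhibiting a positive measure $L \subseteq I$ contained in four of the $F_h$'s. I would attack this by a direct double-counting argument: if the total mass $\sum_h \mu(F_h)$ exceeds $3\mu(I)$, then the counting function $N(x) := |\{h : x \in F_h\}|$ must satisfy $N \geq 4$ on a positive measure subset of $I$ (otherwise $\int_I N \leq 3\mu(I)$ would contradict the bound). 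Splitting this positive measure set according to the $\binom{7}{4}$ possible $4$-subsets that witness $N(x) \geq 4$, one piece has positive measure and supplies both $L$ and the $h_i$'s.

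The bulk of the work is therefore to establish $\sum_h \mu(F_h) > 3\mu(I)$. The identity contributes $\mu(F_{1_\mathbf{F}}) = \mu(E \cap I) \geq \tfrac12 \mu(I)$ by hypothesis. For each $h = g_i$, the strategy is to estimate $\mu(F_h)$ in two steps. First, the elementary inclusion $F_h \cdot h = E \cap (I \cdot h) \supseteq (E \cap I) \setminus (I \setminus I \cdot h)$ bounds $\mu(F_h \cdot h)$ from below by $\tfrac12 \mu(I) - \mu(I \setminus I \cdot h)$; second, Lemma \ref{prel1} applied to $J = F_h \subseteq I$ converts this into a lower bound for $\mu(F_h)$ itself, at the cost of a factor $(1+\epsilon)^{-1}$. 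Shrinking $\delta$ forces both $\epsilon$ and $\mu(I \triangle I \cdot h)$ to be small, the latter because a projective transformation close to the identity in $PSL_2(\mathbb{R})$ moves the endpoints of $I$ by only a small amount. The net result is $\mu(F_h) \geq (\tfrac12 - \epsilon')\mu(I)$ for each of the six nonidentity elements, with $\epsilon' \to 0$ as $\delta \to 0$.

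Summing yields $\sum_h \mu(F_h) \geq (\tfrac72 - 6\epsilon')\mu(I)$, and the crucial numerical fact is that $\tfrac72 > 3$, so for $\delta$ chosen small enough the sum exceeds $3\mu(I)$ and the double-counting step above closes the argument. The main obstacle is really the arithmetic tightness of this bound: one needs exactly six nontrivial near-identity translates so that the identity contribution ($\tfrac12$), combined with the six near-identity contributions (each close to $\tfrac12$), totals more than $3$, leaving just enough slack to absorb the $\epsilon$-losses in Lemma \ref{prel1}. This is presumably why the number $6$ is hard-coded into the statement, and it is likely the source of the eventual bound of $25$ on the Tarski number.
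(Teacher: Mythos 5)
Your argument is correct and is essentially the paper's own proof: the same count $\sum_h \mu(F_h) > 3\mu(I)$ (the paper writes it as $\int \sum_i \chi_{L_i}\,d\mu \geq \tfrac{7}{2}\mu(I) - \tfrac{1}{4}\mu(I)$) followed by the same integer-valued/pigeonhole step, with Lemma \ref{prel1} plus an endpoint-perturbation bound supplying the measure estimates. The only cosmetic difference is that the paper works with the pushed-forward sets $(J\cdot g_i^{-1})\cap I$ and an auxiliary interval $I'$ with $\mu(I'\setminus I) < \tfrac{1}{48}\mu(I)$ in place of your direct bound on $\mu(I\setminus I\cdot h)$.
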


\begin{proof}
Let $I'$ be an interval properly containing $I$ such that $\mu(I'\setminus I)<\frac{1}{48}\mu(I)$.
We choose a $\delta>0$ such that the following conditions are satisfied.
\begin{enumerate}
\item For each $g\in B_{\delta}(1_H)$ and $J\subset I$ it holds that $(1-\frac{1}{48})\mu(J)<\mu(J\cdot g^{-1})<(1+\frac{1}{48})\mu(J)$.
\item For each $g\in B_{\delta}(1_H)$ it holds that $I\cdot g^{-1}\subseteq I'$
\end{enumerate}
A positive real satisfying the first condition is obtained from Lemma \ref{prel1}
together with continuity of group inversion.
A positive real that satisfies the second is obtained from the continuity of the action of $\bf{F}$ on $\mathbb{R}\cup \{\infty\}$.
We claim that the smaller of these two reals satisfies the statement of the Lemma.

The sets $\phi_{u,a}^{-1}[\frac{1}{2},1], \phi_{u,b}^{-1}[\frac{1}{2},1]$
cover $I$, so either $\phi_{u,a}^{-1}[\frac{1}{2},1]\cap I$ or $\phi_{u,b}^{-1}[\frac{1}{2},1]\cap I$
has measure at least $\frac{1}{2}\mu(I)$.
Let us assume that the set $J=\phi_{u,a}^{-1}[\frac{1}{2},1]\cap I$ 
has measure at least $\frac{1}{2}\mu(I)$.

Fix $g_1,...,g_6\in B_{\delta}(1_{\bf{F}})$ and $g_0=1_{\mathbf{F}}$,
and define the sets $L_i=(J\cdot g_i^{-1})\cap I$.
From our assumptions above,
it follows that $\mu(L_i)\geq \frac{1}{2}\mu(I)-\frac{1}{24}\mu(I)$
for $1\leq i\leq 6$ and $\mu(L_0)\geq \frac{1}{2}\mu(I)$.

Now consider the function $f=\sum_{0\leq i\leq 6} \chi_{L_i}$.
This function has support in $I$ and 
$$\int \sum_{0\leq i\leq 6} \chi_{L_i}d\mu=\sum_{0\leq i\leq 6}\mu(L_i)\geq \frac{7}{2}\mu(I)-\frac{1}{4}\mu(I)>3\mu(I)$$
Therefore, it holds that the function $f$ takes value at least $4$
on a positive measure subset of $I$.
(Or else, $\int f d\mu\leq 3\mu(I)$.)

By the pigeonhole principle, there is a positive measure set $L\subseteq I$
and a four element set $\{h_1,h_2,h_3,h_4\}\subset \{1_{\mathbf{F}},g_1,...,g_6\}$
such that $L\cdot h_i\subseteq  \phi^{-1}_{u,a}[\frac{1}{2},1]$ for each $1\leq i\leq 4$.
This proves the Lemma.
\end{proof}

We choose a set of reduced words $T=\{g_1,...,g_6\}\cup \{h_1,...,h_6\}$ in $\mathbf{F}=\langle a,b\rangle$ such that:
\begin{enumerate}
\item Each $g_i$ is a reduced word of the form $a^i w a^{-i}$ in $\langle a,b\rangle$. 
\item Each $h_i$ is a reduced word of the form $b^i w' b^{-i}$ in $\langle a,b\rangle$. 
\item $T=\{g_1,...,g_6\}\cup \{h_1,...,h_6\}\subset B_{\delta}(1_{\mathbf{F}})$.
\item The restriction of each element in $T$
to $I$ has bounded image in $\mathbb{R}$, i.e. $I$ does not contain the singularity points
of these elements.
\end{enumerate}
The existence of such words is guaranteed by continuity of conjugation and multiplication in the non discrete free group $\mathbf{F}$.
We leave this as an exercise for the reader.
We remark that condition $4$ is satisfied by elements of $B_{\delta}(1_{\mathbf{F}})$, where $\delta$ is from the proof of \ref{basiclemma}.

We find a set of elements $\tilde{T}=\{\mathbf{g}_1,...,\mathbf{g}_6\}\cup \{\mathbf{h_1},...,\mathbf{h}_6\}$
in $H$ such that the action of $\mathbf{g}_i, \mathbf{h}_i$ agrees respectively with the action of $g_i,h_i$
on $I$.
The fact that such elements can be found has been made explicit in the proof of non amenability for the groups in
\cite{Monod} and \cite{LodhaMoore}.

The set $\tilde{T}\cup \{1_H\}$ will be the set of translating elements of our paradox.
We now prove that this is indeed a translating set by demonstrating that it satisfies Hall's $2$-marriage condition.

\begin{prop}\label{Marriage}
For each $u\in P_f(H)$ it holds that $|(\tilde{T}\cup \{1_H\}) \cdot u|\geq 2|u|$.
\end{prop}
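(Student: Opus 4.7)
My plan is to apply Lemma~\ref{basiclemma} to produce four $\mathbf{F}$-elements witnessing doubling on a positive-measure set $L\subseteq I$, and then turn the measure-theoretic content into a cardinality lower bound for $(\tilde{T}\cup\{1_H\})\cdot u$ by evaluating at a generic $y\in L$.

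Applying Lemma~\ref{basiclemma} to $u$ and swapping the roles of $a$ and $b$ if necessary, I may assume case~(1): there exist a positive-measure set $L\subseteq I$ and four distinct elements $h_1,\dots,h_4\in\{1_{\mathbf{F}},g_1,\dots,g_6\}$ with $L\cdot h_i\subseteq\phi_{u,a}^{-1}[\tfrac{1}{2},1]$. Let $\tilde{h}_i\in\tilde{T}\cup\{1_H\}$ be the $H$-element whose action on $I$ agrees with $h_i$; the $\tilde{h}_i$'s are distinct, so it suffices to prove $|\{1_H,\tilde{h}_1,\dots,\tilde{h}_4\}\cdot u|\geq 2|u|$.

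Now pick a generic $y\in L$ lying outside the null sets where the $\mathbf{M}$-action fails to be free or where $A_x\cap B_x$ fails to be empty for some $x$ in the orbit $[y]$. The $H$-action is then also free at $y$, so $g\mapsto y\cdot g$ is an injection $H\hookrightarrow[y]$, and since $y\in I$ we have $y\cdot\tilde{h}_i=y\cdot h_i$. Writing $h_0:=1_{\mathbf{F}}$, the problem reduces to producing at least $2|u|$ distinct points among $\{(y\cdot h_i)\cdot g:i\in\{0,\dots,4\},\ g\in u\}\subseteq[y]$. For each $i\in\{1,\dots,4\}$, the hypothesis $\phi_{u,a}(y\cdot h_i)\geq\tfrac{1}{2}$ unfolds to $|S_i|\geq|u|/2$, where $S_i=\{g\in u:(y\cdot h_i)\cdot g\in A_{y\cdot h_i}\}$. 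Translating via the a.e.\ bijection $m\mapsto y\cdot m$ that identifies $[y]$ with $\mathbf{M}$, each pair $(i,g)$ with $g\in S_i$ contributes a $\mathbf{M}$-element $h_i\cdot\tilde{g}_{y\cdot h_i}\in h_iP_a$; summing over $i$ yields a multiset of size at least $\sum_{i=1}^4|S_i|\geq 2|u|$.

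It remains to show that this multiset contains $2|u|$ \emph{distinct} $\mathbf{M}$-elements, which then lifts to distinctness of the $\tilde{h}_ig$'s in $H$ by injectivity at $y$. Within a fixed $i$ this is automatic, since $g\mapsto\tilde{g}_{y\cdot h_i}$ is injective on $u$. Across different $i$, I exploit the stair-step shape $h_i=a^{k_i}w_{k_i}a^{-k_i}$ with distinct $k_i$: using the unique factorization $\mathbf{M}=\mathbf{F}\cdot S$, the reduced $\mathbf{F}$-part of $h_i\cdot\tilde{g}_{y\cdot h_i}$ inherits the leading $a^{k_i}$-prefix of $h_i$ outside an exceptional family in which the cancellation between the trailing $a^{-k_i}$ of $h_i$ and the initial segment of $\tilde{g}_{y\cdot h_i}$ consumes all of $w_{k_i}$; shrinking $L$ by the corresponding null set lets me discard this family. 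The main obstacle is carrying out this last combinatorial step rigorously, because left multiplication by $h_i$ does \emph{not} preserve the partition $\mathbf{M}=P_a\sqcup P_b$ in general, so one must track reductions in $\mathbf{F}$ carefully; the specific choice of the words $w_{k_i}$ in the construction of $T$ (their length and the alternation of $a,b$-letters) is precisely what confines the exceptional cancellations to a null set.
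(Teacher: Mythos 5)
Your overall strategy matches the paper's: apply Lemma~\ref{basiclemma}, fix a generic point $y$ in the resulting positive-measure set, convert the inequalities $\phi_{u,a}(y\cdot h_i)\geq\frac{1}{2}$ into cardinality bounds of the form $|S_i|\geq\frac{1}{2}|u|$, and finish by showing the four contributions are pairwise disjoint. The gap is at the disjointness step, and it comes from pairing the wrong half of $T$ with the wrong side of the partition. Lemma~\ref{basiclemma} is stated for arbitrary $g_1,\dots,g_6\in B_{\delta}(1_{\mathbf{F}})$, and in the case where $\phi_{u,a}^{-1}[\frac{1}{2},1]\cap I$ is large the paper instantiates it with the \emph{$b$-conjugates} $h_i=b^iw'b^{-i}$. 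This crossing is the entire reason $T$ is built from two families: for $k\in\{1_{\mathbf{F}}\}\cup\{h_1,\dots,h_6\}$ and $w\in\mathbf{F}_a$ (empty or beginning with $a^{\pm1}$) there is no cancellation against the trailing $b^{-i}$, so the sets $k\cdot\mathbf{F}_a$, hence $k\cdot P_a$, hence $A_{y\cdot k}=y\cdot(k\cdot P_a)$, are pairwise disjoint, and the four sets $\mathbf{k}_i\cdot u\cap L_i$, each of size at least $\frac{1}{2}|u|$, are disjoint subsets of $(\tilde{T}\cup\{1_H\})\cdot u$. You instead land in $\phi_{u,a}^{-1}[\frac{1}{2},1]$ using the $a$-conjugates $g_j=a^{k_j}w_{k_j}a^{-k_j}$, and then the sets $g_j\cdot P_a$ are very far from disjoint: for instance $a=g_j\cdot(g_j^{-1}a)$ with $g_j^{-1}a\in\mathbf{F}_a$, so $a\cdot S\subseteq g_jP_a\cap P_a$ for every $j$; more tellingly, $g_j\mathbf{F}_a=\mathbf{F}\setminus g_j\mathbf{F}_b$ and $g_j\mathbf{F}_b$ is only the thin cone of reduced words extending $g_jb^{\pm1}$, so $g_j\mathbf{F}_a$ contains almost all of $\mathbf{F}_a$. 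The overlap is generic, not exceptional.

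For the same reason your proposed repair cannot work: the failure of disjointness is a combinatorial fact about the words $h_i$ and the group elements of $u$, not a property of the base point, so discarding a null set of $y\in L$ changes nothing about which products $h_i\cdot m$ coincide. You flag this cancellation analysis yourself as the ``main obstacle,'' and as set up it is not an obstacle that finer bookkeeping removes. The fix is simply to use the other half of $T$: when the mass sits on the $a$-side, translate with the $b$-conjugates (and vice versa), which is exactly what the paper does; with that one change your argument becomes the paper's proof.
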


\begin{proof}
Assume without loss of generality that $\mu(\phi^{-1}_{u,a}[\frac{1}{2},1]\cap I)\geq \frac{1}{2}\mu(I)$.
Following the statement of \ref{basiclemma} applied to the elements $\{h_1,...,h_6\}$, there is a four element set $\{k_1,k_2,k_3,k_4\}\subseteq \{1_{\bf{F}},h_1,...,h_6\}$,
and a set of positive measure $I'\subseteq I$,
such that for each $k\in \{k_1,k_2,k_3,k_4\}$ we have that $I'\cdot k\subseteq \phi^{-1}_{u,a}[\frac{1}{2},1]$.
By discarding a null set if necessary, we can assume that $\mathbf{M}$ acts freely on the orbits that intersect $I'$.
For notational convenience, we denote as $\mathbf{k}_i$ the element of $\{\mathbf{h}_1,...,\mathbf{h}_6,1_{H}\}$
that agrees with $k_i$ on $I$.

Fix $x\in I'$.
Define the sets 
$$L_i=\{g\in H\mid x\cdot g\in A_{x\cdot k_i}\}=\{g\in H\mid x\cdot g\in A_{x\cdot \mathbf{k}_i}\}$$
$$=\{g\in H\mid x\cdot g\in x\cdot (k_i\cdot P_a)\}$$
for $1\leq i\leq 4$.
By definition of $k_i$ and from the freeness of the action of $\mathbf{M}$ on the orbit of $x$, it follows that 
$x\cdot (k_1\cdot P_a),...,x\cdot (k_4\cdot P_a)$ are pairwise disjoint, and so $L_1,...,L_4$ are pairwise disjoint in $H$.

By our assumption, we know that 
$\phi_{u,a}(x\cdot k_i)\geq \frac{1}{2}$ for each $1\leq i\leq 4$.
Therefore,
$$\phi_{u,a}(x\cdot k_i)=\phi_{u,a}(x\cdot \mathbf{k}_i)=\lambda_{u,x\cdot \mathbf{k}_i}(A_{x\cdot \mathbf{k}_i})$$
$$=u(\{g\in H\mid (x\cdot \mathbf{k}_i)\cdot g\in A_{x\cdot \mathbf{k}_i}\})=u(\mathbf{k}_i^{-1}\cdot L_i)\geq \frac{1}{2}$$

We collect the main observations:
\begin{enumerate}
\item $L_1,...,L_4$ are pairwise disjoint.
\item For each $1\leq i\leq 4$, $\frac{|u\cap (\mathbf{k}_i^{-1}\cdot L_i)|}{|u|}\geq \frac{1}{2}$.
\end{enumerate}
It follows that $|\{\mathbf{k}_1,...,\mathbf{k}_4\}\cdot u|\geq 2|u|$.
\end{proof}

{\bf Proof of the main theorem}:
Consider the sets $S_1=\tilde{T}\cup \{1_H\}, S_2=\tilde{T}$.
It follows that for each $u_1,u_2\in P_f(H)$
$$|S_1\cdot u_1 \cup S_2\cdot u_2|=|(\tilde{T}\cdot (u_1\cup u_2)) \cup u_1|\geq |u_1\cup u_2|+|u_1\cap u_2|=|u_1|+|u_2|$$
It follows from Hall's marriage theorem for evenly colored $2$-marriages (see Theorem $2.6$ \cite{ErshovGolanSapir})
that $S_1,S_2$ are translating sets for a paradoxical decomposition of $H$.
It follows that the Tarski number of $H$ is at most $25$.

\end{document}